\DeclareMathAlphabet{\matheurm}{U}{eur}{m}{n}
\newtheorem{theorem}{Theorem}[section]
\newtheorem{corollary}[theorem]{Corollary}
\newtheorem{proposition}[theorem]{Proposition}
\newtheorem{conjecture}[theorem]{Conjecture}
\theoremstyle{definition}
\newtheorem{definition}[theorem]{Definition}
\newtheorem{remark}[theorem]{Remark}
\newtheorem{example}[theorem]{Example}
\newtheorem{construction}[theorem]{Construction}
\DeclareMathAlphabet{\matheurm}{U}{eur}{m}{n}
\DeclareMathOperator*{\hocolim}{hocolim}
\DeclareMathOperator{\id}{id}
\newcommand{\cat}{\matheurm{Cat}}
\newcommand{\fibr}[2]{\begin{pmatrix} {#1} \\ \downarrow \\ {#2}\end{pmatrix}}
\newcommand{\RR}{\mathbb{R}}
\newcommand{\calC}{\mathrm{Cob}^\delta}
\newcommand{\dell}{\partial}
\newcommand{\op}{\mathrm{op}}
\newcommand{\E}{\matheurm{E}}
\newcommand{\C}{\mathcal{C}}
\renewcommand{\S}{\mathcal{S}}
\newcommand{\D}{\mathcal{D}}
\newcommand*{\defeq}{\mathrel{\vcenter{\baselineskip0.5ex \lineskiplimit0pt
                     \hbox{\scriptsize.}\hbox{\scriptsize.}}}%
                     =}
\newcommand{\Biv}{\matheurm{Biv}}
\newcommand{\Bun}{\matheurm{Bun}}
\newcommand{\BCob}{B\mathrm{Cob}}
\newcommand{\BCobSp}{\mathbf{BCob}}
\begin{document}

\title[Topological manifold bundles and the $A$-theory assembly map]{Topological manifold bundles \\and the $A$-theory assembly map} %
\author[G. Raptis]{George Raptis}
\author[W. Steimle]{Wolfgang Steimle}

\begin{abstract}
We give a new proof of an index theorem for fiber bundles of compact \emph{topological} manifolds due to Dwyer, 
Weiss, and Williams, which asserts that the parametrized $A$-theory characteristic of such a fiber bundle factors canonically
through the assembly map of $A$-theory. Furthermore our main result shows a refinement of this statement by providing 
such a factorization for an extended $A$-theory characteristic, defined on the parametrized topological cobordism 
category. The proof uses a convenient framework for bivariant theories and recent results of Gomez-Lopez and Kupers 
on the homotopy type of the topological cobordism category. We conjecture that this lift of the extended 
$A$-theory characteristic becomes highly connected as the manifold dimension increases.
\end{abstract}

\address{\newline
G. Raptis \newline
Fakult\"at f\"ur Mathematik, Universit\"at Regensburg, 93040 Regensburg, Germany}
\email{georgios.raptis@ur.de}

\address{\newline
W. Steimle \newline
Institut f\"ur Mathematik, Universit\"at Augsburg, 86135 Augsburg, Germany}
\email{wolfgang.steimle@math.uni-augsburg.de}

\maketitle

\section{Introduction}

In \cite{DWW}, Dwyer, Weiss, and Williams defined the \emph{parametrized $A$-theory characteristic} of a fibration $p \colon E\to B$ with homotopy finite fibers, a fundamental 
$K$-theoretic invariant of $p$ which generalizes the classical Euler characteristic. This invariant is a section $\chi(p)$ of the fibration $A_B(E)\to B$ that is obtained from $p$ 
by applying Waldhausen's $A$-theory functor fiberwise. The Index Theorem of \cite{DWW} for topological manifold bundles asserts that if the fibration $p$ is equivalent to a 
fiber bundle of compact topological manifolds, then $\chi(p)$ factors canonically through the fiberwise assembly map $A^\%_B(E)\to A_B(E)$. This theorem 
is an analogue of the smooth index theorem of \cite{DWW} and may be seen as a strong version of the Bismut--Lott index theorem \cite[Theorem 0.1]{BL} 
in the setting of topological manifold bundles.

In this paper we give a new proof of a strong version of this fundamental result. This builds on and improves ideas of our approach to the corresponding index theorem of \cite{DWW} in the case of smooth manifolds (see \cite{RS1, RS2}). 
A main ingredient of the proof is the result on the homotopy type of the topological cobordism category with tangential structure, which was recently obtained by Gomez-Lopez and Kupers \cite{GLK}. This homotopy type has a formally similar description as in the smooth case, but it is much less tractable, due to the appearance of topological Grassmannians, so the arguments from \cite{RS1, RS2} do not apply exactly in this context. However, we show that the precise identification of this homotopy type is not required, but what matters is the fact that it is \emph{excisive} in  the tangential structure. This analysis is carried out in the formalism of \emph{bivariant theories}, in which we state and prove a general bivariant index theorem.

The connection between the Index Theorem and cobordism categories is based on the fact that the parametrized $A$-theory characteristic can naturally be extended to a map on the classifying 
space of the cobordism category. This was observed in \cite{BM, RS1} for the smooth cobordism category, and in \cite{RS2} the authors improved this to a \emph{bivariant} 
transformation from a bivariant version of the cobordism category to bivariant $A$-theory. General bivariant theories come with universal constructions, such as coassembly and assembly 
transformations, whose study is related to index type theorems. This leads to Theorem \ref{formal_index_theorem} which is a formal version of the topological Dwyer--Weiss--Williams index theorem in the abstract setting of bivariant theories. 
Theorem \ref{main_index_thm} specializes this general result to a bivariant index theorem for topological manifold bundles, from which we deduce 
the Index Theorem of \cite{DWW} (Corollary \ref{DWW_thm}). 

The proof given here applies also in the smooth category with only minor modifications. Therefore, the parallel formulations of the Index Theorem in the categories of topological and smooth manifolds are now paired with parallel methods of proof. However, unlike in the case of topological manifolds, the proof in the smooth setting in \cite{RS1, RS2} required the detailed identification of the homotopy type of the cobordism category, in order to identify the map 
from the cobordism category to $A$-theory. In this connection, we conjecture an analogous identification of this map in the case of the topological cobordism category (Conjecture \ref{conjecture}).

\medskip

\noindent \textbf{Acknowledgements.} The first--named author thanks the Mathematical Institute, University of Oxford, for the support and the hospitality during an academic visit while 
this work was in preparation. He was also supported by the \emph{SFB 1085 -- Higher Invariants} (University of Regensburg) funded by the DFG. The second--named author was partially supported by the \emph{SPP 2026 -- Geometry at infinity} funded by the DFG. 

\section{Bivariant Theories}

\subsection{Preliminaries} A notion of bivariant theory was introduced in \cite{RS2} in order to fomalize the 
functoriality properties of the parametrized cobordism category of  compact smooth manifolds. We consider here a small modification of this notion which is better suited for the corresponding parametrized cobordism category of compact topological manifolds that will be defined in the next section.

We fix an integer $d \geq 0$. A \emph{family of $\RR^d$-bundles} is a triple 
\[\theta = (B,\; p\colon X\to B, \;\xi\colon V\to X)\]
where $B$ is a space which has the homotopy type of a CW complex, $p$ is a fibration, 
and $\xi$ is a numerable topological $\RR^d$-bundle. We additionally assume that $X$ is a subset of $B\times \mathcal U$ 
and that  $V$ is a subset of $X \times \mathcal U$, for a fixed set $\mathcal U$ of sufficiently high cardinality, in such 
a way that the respective maps to $B$ and $X$ are given by the projection. Each such triple gives rise to a notion of tangential structure for 
$B$-parametrized families of topological $d$-manifolds. Given two families of $\RR^d$-bundles $\theta = (B,p,\xi)$ and 
$\theta = (B, p', \xi')$ with the same base space $B$, a \emph{bundle map} $b \colon \theta \to \theta'$ consists of a fiberwise map $p\to p'$ which is covered by 
a fiberwise homeomorphism $\xi\to \xi'$.

Given a family of $\RR^d$-bundles $\theta = (B, p, \xi)$ and a map $g \colon B' \to B$, where $B'$ also has the homotopy type of
a CW complex, there is a new family of $\RR^d$-bundles: 
\[g^* \theta \defeq (B',\; p'\colon g^* X\to B, \;\xi'\colon g^*V\to g^*X)\]
where $g^*X$ and $g^*V$ are the pull-backs of $X$ and $V$ along $g$ (viewed as subsets of $B'\times \mathcal U$ and of 
$g^*X\times \mathcal U$, respectively), and $p'$ and $\xi'$ are the canonical maps induced by $p$ and $\xi$. A bundle map 
$b\colon \theta\to \theta'$ induces functorially (in $b$) a bundle map $g^*b\colon g^*\theta\to g^*\theta'$. Note also that 
the rule $g \mapsto g^*$ is itself functorial, in the sense that we have $g^*f^*\theta=(f\circ g)^*\theta$ and $\id^*\theta=\theta$.

Families of $\RR^d$-bundles are the objects of a category $\Biv$, where a morphism $(B, p, \xi) \to (B', p', \xi')$ consists of a map $g\colon B'\to B$ together with a bundle map $b\colon g^*(B, p, \xi)\to (B', p', \xi')$; the composition is defined by the rule
\[(h, c)\circ (g,b)\defeq (g\circ h, c\circ h^* b).\]

\begin{definition}
A \emph{bivariant theory} with values in a category $\E$ is a functor 
$$\C \colon \Biv\to \E.$$
\end{definition}

Explicitly, a bivariant theory $\C$ consists of the following assignments:
\begin{itemize}
\item[(a)] for each family of $\RR^d$-bundles $\theta=(B, p, \xi)$, an object $\C(\theta)$ of $\E$;
\item[(b)] for each family of $\RR^d$-bundles $\theta$ and each map $g \colon B' \to B$, a morphism (\emph{contravariant operation}) $g^* \colon \C(\theta) \to \C(g^*\theta)$ in $\E$; 
\item[(c)] for each bundle map of families of $\RR^d$-bundles $b\colon \theta\to \theta'$, a morphism (\emph{covariant operation}) $b_* \colon \C(\theta) \to \C(\theta')$ in $\E$,
\end{itemize}
such that 
\begin{enumerate}
 \item the collection of the morphisms $g^*$ satisfies the standard properties for contravariant functoriality;
 \item the collection of the morphisms $b_*$ satisfies the standard properties for covariant functoriality;
 \item the covariant and contravariant operations commute with each other in the sense that each of the following squares is commutative:
$$
\xymatrix{
\C(\theta) \ar[r]^{g^*} \ar[d]^{b_*} & \C(g^*\theta) \ar[d]^{(g^*b)_*} \\
\C(\theta') \ar[r]^{g^*} & \C(g^*\theta'). 
}
$$   
\end{enumerate}

\begin{remark}
This definition of bivariant theory is similar to the one considered in \cite{RS2}. 
The main difference is that we now also allow bundles $\xi$ which are not vector 
bundles and the datum $\xi$ is not presented in terms of a classifying map to $\mathrm{BO(d)}$ or $\mathrm{BTop(d)}$. 
This notion of bivariant theory is also closely related to the definition of bivariant theory due 
to Fulton--MacPherson \cite{FM} with the difference that we do not require the structure of product operations.
\end{remark}

A \emph{bivariant transformation} $\tau \colon \C \to \D$ is defined to be a natural transformation of functors. Explicitly, 
this consists of a collection of morphisms in $\E$,  $\tau(\theta) \colon \C(\theta) \to \D(\theta)$, for each family of 
$\RR^d$-bundles $\theta$, which is natural with respect to the covariant and contravariant operations. 
Suppose now that $\E$ is a category with weak equivalences (for example, the category of spaces or spectra with the standard 
classes of weak equivalences). A bivariant transformation is a \emph{weak equivalence} if it is given by weak equivalences 
for each family of $\RR^d$-bundles. 

We call a bivariant theory \emph{homotopy invariant} if the contravariant operation $g^*$ is a weak equivalence in $\E$ when $g$ is a 
homotopy equivalence (equivalently, weak homotopy equivalence) and the contravariant operation $b_*$ is a weak equivalence in $\E$ when $b$ is a weak homotopy 
equivalence. (We call a bundle map $b\colon (B, p \colon X \to B, \xi) \to (B, p'\colon X'\to B, \xi')$ a weak homotopy equivalence 
if the underlying map $X \to X'$ is a weak homotopy equivalence.)

\begin{example}(Bivariant A-theory) \label{bivariant_A_theory} The assignment
\[(B, p, \xi) \mapsto \mathbf A(p)\]
(bivariant $A$-theory spectrum of the fibration $p$) extends canonically to a bivariant theory; see \cite{Wi} or \cite[Section 3]{RS1}. 
This theory is homotopy invariant (see the proof of \cite[Proposition 3.6]{RS1} for covariant 
homotopy invariance and \cite[Proposition 3.8]{RS1} for contravariant homotopy invariance; we note that the first--mentioned 
proof also applies to the class of weak homotopy equivalences). Note that the $\RR^d$-bundle $\xi$ plays no role in 
the definition of $\mathbf A$.
\end{example}

A bivariant theory gives rise to a collection of covariant and contravariant functors. 
We will be interested in the following two types of functors that arise from a bivariant theory. 
Let $\Bun$ denote the category of numerable topological $\RR^d$-bundles. More precisely, $\Bun$ 
is the full subcategory of $\Biv$ on objects $(B, p, \xi)$ where $B=*$, that is, the objects of $\Bun$ 
are numerable topological $\RR^d$-bundles $(\xi \colon V \to X)$ (where $X$ is a subset of $\mathcal U$ 
and $V$ is a subset of $X\times \mathcal U$), and a morphism is a map of base spaces covered by a fiberwise 
homeomorphism.

\begin{definition}
The \emph{covariant part} $\overline{\C} \colon \Bun \to \E$ of a bivariant theory $\C$ is the restriction of $\C$ to $\Bun$.
\end{definition}

As the following construction shows, there is also a reverse process that takes covariant functors on $\Bun$ to bivariant 
theories. 

\begin{construction}(Associated bivariant theory) \label{assoc-biv-theory}
Let $F \colon \Bun \to \E$ be a functor where $\E$ is the category of spaces or spectra. Suppose that $F$ is homotopy invariant, 
i.e.,  it sends weak homotopy equivalences to weak equivalences in $\E$. Following the construction of \cite[Subsection 4.2]{RS2}, there 
is an associated bivariant theory $F^{\&}$ such that $\overline{F^{\&}}$ is $F$ (up to canonical weak equivalence). The assignment
$F \mapsto F^{\&}$ is functorial in $F$. More specifically, the value of $F^{\&}$ at $(B, p \colon X \to B, \xi \colon V \to X)$ 
is given by the space (or spectrum) of sections of the fibration $F_B(\xi) \to B$ whose fiber at $x \in  B$ is given by 
$F(\xi_{|p^{-1}(x)} \colon V_{| p^{-1}(x)} \to p^{-1}(x))$. In other words, $F^{\&}(B, p, \xi)$ is the homotopy limit 
of $F$ restricted to the fibers of $p$, where these fibers are regarded as defining a classifying diagram for $p$ with values 
in $\Bun$.   
\end{construction}

On the other hand, a bivariant theory $\C$ restricts to a collection of contravariant functors as follows. 
Let $\S_B$ denote the category of spaces over $B$ which are of the homotopy type of a CW complex. For any family of 
$\RR^d$-bundles $\theta = (B, p, \xi)$, we can view $\S_B$ as a subcategory of $\Biv$ by sending $(g\colon B'\to B)$ 
to the family of $\RR^d$-bundles $g^*\theta$. 
As a consequence, a bivariant theory $\C$ restricts to a (contravariant) functor:
$$\C_{/\theta} \colon \S_B^{\op} \to \E, \ \ (g \colon B' \to B) \mapsto \C(g^*\theta).$$

\subsection{Coassembly} The construction of the coassembly transformation for bivariant theories was introduced in \cite[Subsection 4.2]{RS2}. We recall here some facts about this construction.
\emph{We restrict throughout to bivariant theories with values in the category $\E$ of spaces or spectra, equipped with the usual class of weak equivalences.}

A homotopy invariant bivariant theory $\C$ is \emph{contravariantly excisive} if the functor $\C_{/\theta}$ is excisive for every $\theta=(B,p,\xi)$ (that is, if $\C_{/\theta}$ sends homotopy colimits to homotopy limits) -- this was called \emph{strongly excisive} in \cite{RS2}. This property essentially says that $\C$ 
is cohomological in $B$ with respect to the contravariant functoriality. If $\C_{/\theta}$ is an excisive functor with values in spectra, then it gives rise to a cohomology theory on spaces over $B$, with $B$-twisted coefficients given by the values of $\C_{/\theta}$ at $(\{x\}, p^{-1}(x) \to \{x\}, \xi_{|p^{-1}(x)})$ for each $x \in B$. 

\begin{example}
Let $F \colon \Bun \to \E$ be a functor where $\E$ is the category of spaces or spectra. Then the associated bivariant theory $F^{\&}$ of Example \ref{assoc-biv-theory} is contravariantly excisive by construction.  
\end{example}

A bivariant transformation $\tau \colon \C \to \D$ of homotopy invariant bivariant theories is a \emph{bivariant coassembly map} if $\D$ is contravariantly excisive and $\tau$ restricts to a weak equivalence of covariant functors $\overline\tau \colon \overline\C \to \overline\D$. If $\C$ is contravariantly excisive, then any bivariant coassembly map $\C \to \D$ is necessarily a weak equivalence of bivariant theories. A bivariant coassembly map for bivariant theories was constructed in \cite[Subsection 4.2]{RS2}
-- the construction applies similarly to our present context. Given a bivariant theory $\C$, the bivariant coassembly map for $\C$ is given by a canonical bivariant transformation:
$$\nabla_{\C}\colon \C \to \C^{\&}\defeq (\overline \C)^\&$$
that is defined essentially by the canonical maps to the respective homotopy limits. 

We summarize the properties of the bivariant coassembly map in the next proposition. We write $[-,-]$ to denote the morphism sets in the homotopy categories of the functor categories $\E^{\Biv}$ and $\E^{\Bun}$, respectively, obtained by formally inverting the (pointwise) weak equivalences.

\begin{proposition}\label{maps_into_excisive_theory}
Let  $\C$ and $\D$ be homotopy invariant bivariant theories and suppose that $\D$ is contravariantly excisive. Then the functor $\C \mapsto \overline \C$ induces a bijection of morphism sets:
\[[\C,\D] \xrightarrow{\cong} [ \overline\C, \overline \D].\] 
As a consequence, the bivariant coassembly map $\nabla_{\C}$ induces a bijection of morphism sets in the homotopy category of bivariant
theories: 
$$
\nabla_{\C}^*: [\C^\&, \D] \xrightarrow{\cong} [\C, \D].
$$
\end{proposition}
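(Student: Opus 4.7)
My plan is to prove the first bijection directly, and then deduce the bijection involving $\nabla_\C^*$ as a formal consequence. The crucial input is the observation (made just before the proposition) that since $\D$ is contravariantly excisive, the bivariant coassembly map $\nabla_\D \colon \D \to \D^\& = (\overline\D)^\&$ is a pointwise weak equivalence, and hence invertible in the homotopy category of bivariant theories. Two further structural facts I would use throughout are that the assignment $F \mapsto F^\&$ is functorial in $F$, and that for any bivariant theory $\C$ the map $\nabla_\C$ restricts on $\Bun$ to a canonical equivalence with the identity, because when $B = *$ the classifying diagram for $p$ reduces to the single fiber.

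To construct an inverse to the restriction map $[\C, \D] \to [\overline\C, \overline\D]$, I would send a morphism $\overline\tau \colon \overline\C \to \overline\D$ in $\Ho(\E^{\Bun})$ to the morphism in $\Ho(\E^{\Biv})$ represented by the zigzag
\[
\C \xrightarrow{\;\nabla_\C\;} (\overline\C)^\& \xrightarrow{(\overline\tau)^\&} (\overline\D)^\& \xleftarrow[\sim]{\nabla_\D} \D,
\]
in which the weak equivalence $\nabla_\D$ is formally inverted. Restricting this composite to $\Bun$ and using that both $\overline{\nabla_\C}$ and $\overline{\nabla_\D}$ are canonical equivalences with the identity recovers $\overline\tau$, so the assignment is a section of restriction. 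For injectivity, if $\tau_1, \tau_2 \in [\C, \D]$ share the same restriction, naturality of $\nabla$ in bivariant transformations gives $\nabla_\D \circ \tau_i = (\overline{\tau_i})^\& \circ \nabla_\C$ for $i = 1, 2$; since the right-hand side depends only on the restriction, the two composites with $\nabla_\D$ agree. Inverting $\nabla_\D$ in the homotopy category then forces $\tau_1 = \tau_2$, establishing the first bijection.

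For the consequence, I would apply the first bijection to each of $[\C^\&, \D]$ and $[\C, \D]$ to identify both with $[\overline\C, \overline\D]$, using the canonical equivalence $\overline{\C^\&} \simeq \overline\C$. The precomposition map $\nabla_\C^*$ intertwines these identifications: concretely, given $\tau \in [\C, \D]$, the first bijection lifts its restriction $\overline\tau$ uniquely to some $\sigma \in [\C^\&, \D]$, and the identity $\sigma \circ \nabla_\C = \tau$ can then be verified by restricting to $\Bun$, where $\overline{\nabla_\C}$ is the identity.

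The main obstacle I expect is bookkeeping rather than substance: one needs to ensure that the identifications $\overline{\C^\&} \simeq \overline\C$ and $\overline{\nabla_\C} \simeq \id$, together with the functoriality of $(-)^\&$ and the naturality of $\nabla$, are coherent enough for the zigzag above to represent a well-defined morphism in the homotopy category and to compose as claimed. These coherences should be immediate from the explicit construction of $(-)^\&$ in \cite[Subsection 4.2]{RS2}, adapted to the present version of bivariant theories.
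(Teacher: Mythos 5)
Your proposal is correct, and it is essentially the argument one expects: since $\D$ is contravariantly excisive, $\nabla_\D$ is a pointwise weak equivalence (as noted in the paragraph just before the proposition), so the inverse to restriction is given by the zigzag $\C \to (\overline\C)^\& \to (\overline\D)^\& \xleftarrow{\sim} \D$, and the second bijection follows formally. The paper itself only cites \cite[Proposition~4.3]{RS2} for the proof, and your argument is the natural implementation of the universal property of the coassembly map developed there, so it matches the intended route.
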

\begin{proof}
The proof is similar to \cite[Proposition 4.3]{RS2}.
\end{proof}

\subsection{A formal index theorem}

We can similarly consider bivariant theories $\C$ whose covariant part $\overline \C$ satisfies excision. By definition, 
a functor $F \colon \Bun \to \E$, where $\E$ is the category of spaces or spectra, is excisive if it is homotopy 
invariant and it preserves homotopy colimits. (It may be helpful here to identify $\Bun$ with the category of spaces over 
$\mathrm{BTop(d)}$, as homotopy theories.) 

\begin{definition} [Fully excisive]
Let $\C$ be a homotopy invariant bivariant theory with values in the category of spaces or spectra. We say that 
$\C$ is \emph{fully excisive} if it is contravariantly excisive and $\overline \C$ is excisive.
\end{definition}

\begin{construction}[Assembly] \label{assembly}
Let $F \colon \Bun \to \E$ be a homotopy invariant functor where $\E$ is the category of spaces or spectra. Following the construction of assembly in \cite{WW}, there is an excisive functor $F^{\%} \colon \Bun \to \E$ and a natural transformation $\alpha_F \colon F^{\%} \to F$ which defines a universal approximation by an excisive functor: for any excisive functor $H \colon \Bun \to \E$, there is a bijection of morphism sets:
\begin{equation} \label{assembly-bij}
\alpha_F \circ - \colon [H, F^{\%}] \xrightarrow{\cong} [H, F].
\end{equation} 
The associated bivariant theory $(F^{\%})^{\&}$ is a fully excisive bivariant theory.  
\end{construction}

\begin{theorem} \label{formal_index_theorem}
Let $\tau \colon \C \to \D$ be a bivariant transformation between homotopy invariant bivariant theories with values 
in the category of spaces or spectra. Suppose that $\overline \C$ is 
excisive. Then there is a unique bivariant transformation in the homotopy category of bivariant theories,
$$\tau^{\%} \colon \C^{\&} \longrightarrow (\overline \D^{\%})^{\&},$$
such that the following diagram commutes in the homotopy category of bivariant theories:
$$
\xymatrix{
\C \ar[d]_{\nabla_{\C}} \ar[rr]^{\tau} && \D \ar[d]^{\nabla_{\D}} \\
\C^{\&} \ar[r]^(0.4){\tau^{\%}} & (\overline  \D^{\%})^{\&} \ar[r]^{\alpha_{\overline \D}^{\&}} & \D^{\&}. 
}
$$
\end{theorem}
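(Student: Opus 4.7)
The plan is to define $\tau^\%$ by chaining two universal-property bijections: the bijection of Proposition \ref{maps_into_excisive_theory} applied to the (contravariantly excisive) target $(\overline\D^\%)^\&$, and the assembly bijection \eqref{assembly-bij} applied to the (excisive) source $\overline\C$. Explicitly, $(\overline\D^\%)^\&$ is contravariantly excisive by construction, so Proposition \ref{maps_into_excisive_theory} yields
$$[\C^\&, (\overline\D^\%)^\&] \;\xrightarrow{\cong}\; [\overline{\C^\&}, \overline{(\overline\D^\%)^\&}] \;\cong\; [\overline\C, \overline\D^\%],$$
where the second identification uses the canonical weak equivalences $\overline{F^\&} \simeq F$ from Construction \ref{assoc-biv-theory}. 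Since $\overline\C$ is excisive, the universality of assembly (\ref{assembly-bij}) gives a further bijection
$$\alpha_{\overline\D}\circ -\;\colon\; [\overline\C, \overline\D^\%] \;\xrightarrow{\cong}\; [\overline\C, \overline\D].$$
I define $\tau^\%$ to be the element of $[\C^\&, (\overline\D^\%)^\&]$ corresponding to $\overline\tau \in [\overline\C, \overline\D]$ under the composite bijection. Both existence and uniqueness of $\tau^\%$ are then immediate.

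To verify the commutativity of the diagram in the homotopy category, I observe that both $\alpha_{\overline\D}^\% \circ \tau^\% \circ \nabla_\C$ and $\nabla_\D \circ \tau$ are elements of $[\C, \D^\&]$, and $\D^\&$ is contravariantly excisive. By Proposition \ref{maps_into_excisive_theory}, it suffices to compare their images under $[\C, \D^\&] \xrightarrow{\cong} [\overline\C, \overline{\D^\&}] \cong [\overline\C, \overline\D]$. On the right, since $\overline{\nabla_\D}$ is by design a weak equivalence, it identifies with $\id$ under $\overline{\D^\&} \simeq \overline\D$, so the image is $\overline\tau$. On the left, $\overline{\tau^\%}$ (viewed as a morphism $\overline\C \to \overline\D^\%$) is, by construction, the unique assembly lift of $\overline\tau$ through $\alpha_{\overline\D}$, so post-composing with $\overline{\alpha_{\overline\D}^\&} = \alpha_{\overline\D}$ again yields $\overline\tau$; combined with $\overline{\nabla_\C} \simeq \id$, the left-hand side also restricts to $\overline\tau$. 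Hence the two morphisms agree.

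The proof is essentially formal manipulation using Proposition \ref{maps_into_excisive_theory} and \eqref{assembly-bij}, closely paralleling the argument for \cite[Proposition 4.3]{RS2}. The main obstacle is conceptual rather than computational: one must recognize that $(\overline\D^\%)^\&$ is the \emph{correct} target for $\tau^\%$, since only then do the contravariant excisiveness of the codomain (needed to invoke Proposition \ref{maps_into_excisive_theory}) and the excisiveness of $\overline\C$ (needed to invoke assembly universality) conspire to make the two universal properties chainable. The remaining bookkeeping of the canonical weak equivalences $\overline{F^\&} \simeq F$ and $\overline{\nabla_\C} \simeq \id$ in the commutativity check requires care but involves no new ideas.
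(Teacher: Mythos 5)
Your proposal is correct and follows essentially the same route as the paper's proof: both define $\tau^\%$ by chaining the bijection from Proposition \ref{maps_into_excisive_theory} with the assembly bijection \eqref{assembly-bij}. The only organizational difference is that the paper first invokes naturality of coassembly to obtain a commutative square $\nabla_\D \circ \tau = \tau^\& \circ \nabla_\C$ and then factors $\tau^\&$ through $\alpha_{\overline\D}^\&$, whereas you construct $\tau^\%$ directly and verify commutativity by reducing to covariant parts; both come to the same thing.
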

\begin{proof}
By the naturality of the coassembly transformation, we obtain a commutative diagram as follows, 
$$
\xymatrix{
\C \ar[d]_{\nabla_{\C}} \ar[r]^{\tau} & \D \ar[d]^{\nabla_{\D}} \\
\C^{\&} \ar[r]_{\tau^{\&}} & \D^{\&}. 
}
$$
The bottom transformation $\tau^{\&}$ factors uniquely through the canonical bivariant transformation $\alpha_{\overline \D}^{\&}$ using the bijections of
Proposition \ref{maps_into_excisive_theory} and Construction \ref{assembly}.
\end{proof}

\begin{remark}
We may view Theorem \ref{formal_index_theorem} as an abstract index type theorem in the following way. Each class $x \in  \pi_0 \C(\theta)$ gives rise to 
two characteristic cohomology classes $(\tau^{\%} \circ \nabla_{\C})(x)$ and $(\nabla_{\D} \circ \tau)(x)$. The theorem then indentifies these two 
classes along the canonical assembly transformation $\alpha^{\&}_{\overline \D}$. In certain special cases of $\C$ and $x$, these two chararacteristic 
classes are related to transfer constructions. 
We refer to \cite{Wi} for a nice overview of this idea.   
\end{remark}

\begin{remark} \label{stronger_univ_coassembly}
By passing to the homotopy category, we contend ourselves here with proving less than what is possible. 
Since all of our constructions are homotopy coherent and our identifications are canonical, 
Theorem \ref{formal_index_theorem} can also be formulated in the homotopy theory of bivariant theories. For this purpose, 
it would be more convenient to consider coassembly and assembly as parts of adjunctions between the respective $\infty$-categories 
of functors.
\end{remark}

\section{An Index Theorem for Topological Manifold Bundles}

\subsection{The (parametrized) topological cobordism category} The topological cobordism category was introduced and studied in \cite{GLK}. Following the definition of the parametrized \emph{smooth} cobordism category in \cite{RS2}, we also define a parametrized bivariant extension of the topological cobordism category.

Let $\theta = (B, p \colon X \to B, \xi \colon V \to X)$ be a family of $\RR^d$-bundles. There is a (discrete) category $\calC(\theta)$ 
of parametrized topological $\theta$-cobordisms over $B$ defined as follows. An object in $\calC(\theta)$ is given by a quadruple $(E, \pi, a, l)$ where:
\begin{itemize}
\item[(i)] $a\in\RR$,
\item[(ii)] $\pi \colon E\to B$ is a numerable fiber bundle of compact $(d-1)$-dimensional topological manifolds, which is fiberwise embedded in $B\times \{a\}\times \RR_+ \times \RR^{\infty}$ 
and the embedding is cylindrical near the fiberwise boundary $\partial_{\pi} E \subset E$, 
\item[(iii)] $l$ is a tangential $\theta$-structure, i.e., a microbundle map $\epsilon \oplus T_{\pi} E \to \xi$ (fiberwise over $B$ and cylindrical near the fiberwise boundary), where $T_{\pi} E$ denotes the vertical tangent microbundle of $\pi$,
and $\epsilon$ is the trivial $\RR$-bundle.
\end{itemize}

A morphism in $\calC(\theta)$ consists of $a_0<a_1\in \RR$ and a numerable fiber bundle of compact topological $d$-manifolds,
\begin{equation} \label{def_morphism}
\pi\colon W\to B,
\end{equation} 
embedded fiberwise in $B\times [a_0,a_1]\times \RR_+ \times \RR^{\infty}$ and cylindrically near the boundary and the corners, together with 
a tangential $\theta$-structure, given by a microbundle map $l_W \colon T_{\pi} W\to \xi$, fiberwise over $B$ and cylindrical near the fiberwise boundary parts. The domain and target of this morphism are 
the intersections $W_0$ and $W_1$ of $W$ with $B \times \{a_0\}\times \RR_+ \times \RR^{\infty}$ and 
$B \times \{a_1\}\times \RR_+ \times \RR^{\infty}$ respectively, together with the restrictions of $l_W$ to these subsets. 
This defines a non-unital category where the composition of morphisms is given by union of subsets in 
$B \times \RR \times \RR_+ \times \RR^{ \infty}$. 

\begin{remark}
For bundles $\pi\colon W \to B$ of compact topological manifolds with \emph{boundary}, as in the previous definition, we find it convenient to define the 
vertical tangent microbundle $T_\pi W$ as the vertical tangent microbundle of the fiberwise \emph{horizontal interiors}, that is, 
the intersection of $W$ with $B\times [a_0, a_1]\times (0, \infty)\times \RR^{\infty}$. (Since the inclusion of the fiberwise 
horizontal interior into the whole bundle is a fiberwise homotopy equivalence, this does not conflict with other possible 
definitions.)
\end{remark}

\begin{remark}
Since every numerable fiber bundle is a fibration, the fiber bundles in (ii) and \eqref{def_morphism} are also fibrations. In addition, since the fibers of 
these fibrations have the homotopy type of a CW complex, it follows that the same holds for the total spaces (see the proof of \cite[Lemma A.1]{RS1}). 
\end{remark}

Given a map $g \colon B' \to B$, we get an induced functor (\emph{contravariant operation})
\[g^*\colon \calC(\theta)\to\calC(g^*\theta)\]
which is defined by taking pullbacks of bundles along $g$. On the other hand, if $\theta = (B, p \colon X \to B, \xi)$ and $\theta' = (B, q \colon Y \to B, \xi')$ are families 
of $\RR^d$-bundles and $b \colon \theta \to \theta'$  is a bundle map, then post-composing with $b$ defines a functor 
(\emph{covariant operation})
\[b_*\colon \calC(\theta) \to \calC(\theta').\]
The operations of $b_*$ and $g^*$ are clearly functorial and commute with each other. 
Thus, the assignment $\calC \colon \theta\mapsto \calC(\theta)$ is a bivariant theory with values in the category of small non-unital categories $\cat$.

\noindent \textbf{Remark on notation.} We will only consider cobordism categories of \emph{topological} manifolds and we will always allow the objects to have \emph{boundary}. Thus, in order to simplify the notation, we will use throughout the notation $\calC(-)$ without any of the decorations that usually 
indicate these choices.

Following \cite[Section 2]{RS2}, we also consider the associated simplicial thickening of this bivariant theory $\calC(\theta)_\bullet$. We recall that for a family of $\RR^d$-bundles $\theta = (B, p \colon X \to B, \xi \colon V \to X)$, $\calC(\theta)_\bullet$ is  
a simplicial category (= simplicial object in $\cat$) which is defined degreewise by 
\[\calC(\theta)_n:=\calC(\theta\times \id_{\Delta^n}),\]
where 
\[\theta\times\id_{\Delta^n} = (B \times \Delta^n, p \times \id_{\Delta^n} X \times \Delta^n \to B\times \Delta^n, \xi \times \id_{\Delta^n} \colon V \times \Delta^n \to X \times \Delta^n).\]
The simplicial operators are defined by the contravariant operations of the bivariant theory $\calC(-)$. 

For every small non-unital category $C$, the \emph{nerve} $N_{\bullet} C$ of $C$ is a semi-simplicial set, and the \emph{classifying space} of $C$, denoted by $BC$, is the geometric realization of the nerve $N_{\bullet} C$. 

\begin{definition} \label{def_cob_cat}
The (fat) geometric realization of the degreewise classifying spaces 
\[\BCob(\theta):= \vert B\calC(\theta)_\bullet\vert\]
is the \emph{classifying space of the parametrized topological $\theta$-cobordism category.}
\end{definition}
 
By construction, the rule $\theta\mapsto \BCob(\theta)$ canonically extends to a bivariant theory with values in spaces.  Definition \ref{def_cob_cat} is in accordance with the existing definition of the topological cobordism category. Indeed, the covariant 
part of $\BCob$ is equivalent to the functor $\xi \mapsto B\mathsf{Cob}^{\operatorname{Top}, \xi}_{\partial}(d, \infty)$ from \cite[section 7.4]{GLK}. 

\begin{proposition} \label{prop_homotopy}
The bivariant theory $\BCob$ is homotopy invariant.
\end{proposition}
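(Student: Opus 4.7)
Let $b \colon \theta \to \theta'$ be a bundle map whose underlying map $X \to X'$ is a weak homotopy equivalence. The induced functor $b_*$ is the identity on underlying parametrized manifold bundles and modifies only the tangential structure, by post-composition with $b$. To show that $b_*$ becomes a weak equivalence after fat realization, I would analyze the forgetful map from $\BCob(\theta)$ to the corresponding cobordism space without tangential structure via a Quillen-Theorem-B-type fibration argument. Thanks to the fat realization over the auxiliary simplicial direction $\Delta^\bullet$, the homotopy fiber of this forgetful map over a fixed underlying bundle $\pi \colon W \to B$ identifies with the full \emph{space} of microbundle maps $\epsilon \oplus T_\pi W \to \xi$ — an $n$-simplex of this fiber being an $n$-parameter family of tangential structures over $W \times \Delta^n$, rather than the discrete set visible at simplicial level zero. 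This is a space of sections of a fiber bundle over $W$ whose fiberwise homotopy type depends only on the data $\xi \colon V \to X$; since $W$ has the homotopy type of a CW complex (as noted in the remarks preceding the proposition), the weak equivalence $X \to X'$ induces a weak equivalence on the corresponding section spaces, whence $b_*$ becomes a weak equivalence.

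\textbf{Contravariant part.} Let $g \colon B' \to B$ be a weak homotopy equivalence. By CW approximation one reduces to the case where $g$ is an honest homotopy equivalence with inverse $g' \colon B \to B'$. A homotopy $H \colon B \times [0,1] \to B$ from $\id_B$ to $g \circ g'$ can then be viewed, after reparametrization, as a $1$-simplex in the simplicial category $\calC(\theta)_\bullet$ over $\theta \times \id_{\Delta^1}$ whose two faces realize, after fat realization, a simplicial homotopy between $\id_{\BCob(\theta)}$ and $(g')^* \circ g^*$. A symmetric construction on $B'$ handles the other composite, so $g^*$ is a weak equivalence.

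\textbf{Main obstacle.} The contravariant part is the more delicate, since one must convert an external homotopy of base maps into genuine simplicial-homotopy data in the bisimplicial classifying space, and then verify that fat realization turns this into an actual homotopy of maps of spaces. The entire proof closely parallels the corresponding arguments for the parametrized smooth cobordism category in \cite[Section 2]{RS2}; since the definitions adopted here for topological manifolds mirror the smooth case, those arguments transfer essentially verbatim.
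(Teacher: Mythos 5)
Your overall structure matches the paper's: the contravariant part and covariant invariance under honest homotopy equivalences follow from the arguments for the smooth parametrized cobordism category (\cite[Proposition 2.4]{RS2}), while the new content is covariant invariance under weak homotopy equivalences. The paper simply cites RS2 for the first two, whereas you re-sketch the contravariant argument (CW approximation, then converting a base homotopy into simplicial-homotopy data via the $\Delta^\bullet$-thickening); this is indeed what RS2 does, so no issue there.

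For the genuinely new covariant step, however, you take a different and substantially heavier route than the paper. You propose a Quillen-Theorem-B-type fibration argument: forget the tangential structure, identify the homotopy fiber as a space of microbundle maps (a section space over $W$), and use obstruction theory. In contrast, the paper fixes the nerve degree $k$ and directly shows that the resulting map of simplicial sets $N_k\calC(\theta')_\bullet \to N_k\calC(\theta)_\bullet$ (varying the thickening degree $n$) is a \emph{trivial Kan fibration}. The lifting problems amount to lifting $W \to X \times \Delta^n$ through the trivial fibration $X' \times \Delta^n \to X \times \Delta^n$ rel the restriction over $B\times\partial\Delta^n$, which is immediate from the mixed model structure of Cole \cite{Co} (the inclusion $W_{|B\times\partial\Delta^n}\hookrightarrow W$ is a mixed cofibration). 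This is considerably more elementary than your approach: it avoids any need to set up a fibration sequence of fat realizations of bisimplicial objects, verify Quillen-B hypotheses, or make precise the identification of the fiber as a section space (which in your sketch glosses over the cylindrical-near-boundary conditions and the fact that the "cobordism space without tangential structure" still carries tangent data). Your argument is not wrong in spirit, but as written it is a plan rather than a proof, and the base-change/fibrancy verification it requires is nontrivial; the paper's trivial-fibration observation sidesteps all of it.
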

\begin{proof}
The proof in \cite[Proposition 2.4]{RS2} shows contravariant homotopy invariance and covariant homotopy invariance with respect to the homotopy equivalences. 
We show that $\BCob$ is covariantly homotopy invariant also with respect to the weak homotopy equivalences. 
Let $\theta = (B, p \colon X \to B, \xi)$ be a family of $\RR^d$-bundles. Let $g \colon X' \to X$ be a map which is a fibration and a weak homotopy equivalence and where $X'$ has the homotopy type of a CW complex. Let $\theta' = (B, p \circ g \colon X' \to B, \xi')$ be the new family of $\RR^d$-bundles, defined by pullback. Then it suffices to show that $\BCob$ sends the bundle map $b \colon \theta' \to \theta$ to a weak equivalence. For $k \geq 0$, consider the map of simplicial sets,
$$N_k \calC(\theta')_\bullet \to N_k \calC(\theta)_\bullet.$$
We claim that this map is a trivial Kan fibration, for each $k \geq 0$, from which the required result follows. The claim amounts to solving lifting problems over $B \times \Delta^n$ of the form:
$$
\xymatrix{
W_{|B \times \partial \Delta^n} \ar[r] \ar@{>->}[d]_i &  X'  \times \Delta^n \ar@{->>}[d]^{\sim} \\
W \ar[r] \ar@{-->}[ru] & X \times \Delta^n, 
}
$$
where $\pi \colon W \to B \times \Delta^n$ is an element in $N_k \calC(\theta)_n$ and $W_{| B \times \partial \Delta^n}$ denotes the restriction over 
$B \times \partial \Delta^n$. The existence of these lifts can be shown directly using standard arguments, or by appealing to the mixed model structure on the category of topological spaces 
\cite[Theorem 2.1, Example 2.2]{Co}, in which $g$ is a trivial fibration by definition, and $i$ is a cofibration by \cite[Corollaries 3.7 and 3.12]{Co}.  
\end{proof} 

The bivariant theory $\BCob$
lifts, through the functor $\Omega^\infty$, to a bivariant theory with values in the category of spectra, which we denote by $\BCobSp$. 
As in the smooth case \cite[Section 2]{RS2}, this can be shown by using the partial monoidal structure on $\calC(\theta)_{\bullet}$ given by union of subsets, 
whenever this is well-defined. This structure gives rise to a group-like (special) $\Gamma$-space which models an 
infinite loop space. The $\Gamma$-space structure can be described more precisely by varying the tangential $\theta$-structure 
as follows: the value of the $\Gamma$-space at the pointed set $n_+$ is 
$$\BCob(\theta(n_+))$$
where  
$$\theta(n_+) = (B, \; \coprod_n X \xrightarrow{ (p,\dots, p)} B, \;\coprod_n V \xrightarrow{\coprod_n \xi} \coprod_n X).$$
We omit the details as the arguments are similar to the smooth case (see \cite[Section 2]{RS2}, \cite{Ng}). 

\begin{proposition} \label{cob_excisive}
The covariant part of (the spectrum-valued theory) $\BCobSp$ is excisive.
\end{proposition}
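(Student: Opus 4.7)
The plan is to combine the Gomez-Lopez--Kupers identification \cite{GLK} of the homotopy type of the topological cobordism category with the fact that topological Madsen-Tillmann (Thom) spectra are excisive in the tangential structure. The argument parallels the smooth analogue in \cite[Section 2]{RS2}.

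First, I would invoke the main theorem of \cite{GLK} to identify the covariant part $\overline{\BCob}(\xi) \simeq B\mathsf{Cob}^{\operatorname{Top},\xi}_\partial(d,\infty)$ with the once-deloop $\Omega^{\infty-1}\mathsf{MT}^{\operatorname{Top}}(\xi)$ of a topological Madsen-Tillmann spectrum $\mathsf{MT}^{\operatorname{Top}}(\xi)$. Combined with the $\Gamma$-space delooping of $\BCob$ described just above the proposition, this lifts to a natural weak equivalence of spectrum-valued functors on $\Bun$ between $\overline{\BCobSp}$ and (the connective cover of) $\Sigma^{-1}\mathsf{MT}^{\operatorname{Top}}(-)$. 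The essential input is the naturality of the GLK equivalence with respect to bundle maps $\xi \to \xi'$, which follows from the parametrised scanning construction used in \cite{GLK}, just as in the smooth case treated in \cite{RS2}.

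Second, I would verify that $\mathsf{MT}^{\operatorname{Top}}(-)$ is an excisive functor $\Bun \to \Sp$. As a Thom spectrum of the virtual bundle $-\xi$ over the base space of $\xi$, it is both homotopy invariant and preserves homotopy colimits in the bundle variable, since the Thom spectrum construction admits a presentation as a homotopy colimit on the base of the fibrewise one-point compactification. Combining the two steps then yields the excisiveness of $\overline{\BCobSp}$. The main potential obstacle is ensuring the full naturality of the GLK identification in the tangential structure and its compatibility with the $\Gamma$-space spectrum structure; these are standard but somewhat technical, and would be handled by adapting the smooth-case arguments of \cite{RS2} to the topological setting.
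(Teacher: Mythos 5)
Your plan follows the broad outline of the paper's proof --- use the Gomez-Lopez--Kupers identification to transfer the question to a scanning-type spectrum that depends functorially on $\xi$, and then show that spectrum is excisive --- but there are two issues, one of emphasis and one a genuine gap.

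\textbf{The route through Thom spectra.} You identify the cobordism category with a topological Madsen--Tillmann spectrum $\mathsf{MT}^{\operatorname{Top}}(\xi)$ and deduce excisiveness from the formal properties of Thom spectra. The paper deliberately avoids this step: what GLK prove directly (see their Corollary~5.8 and its with-boundary analogue from their Section~7.4) is an equivalence $\BCob(\xi)\simeq\Omega^\infty\mathbf B(\xi)$, where $\mathbf B(\xi)$ is the suspension of the spectrum of spaces of $\xi$-manifolds $\Psi^{\operatorname{Top},\xi}_\partial(d)$, and the further identification with a Thom spectrum passes through topological Grassmannians. The introduction warns explicitly that this identification is much less tractable than in the smooth case. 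The paper instead proves excisiveness of $\mathbf B(-)$ directly --- weak homotopy invariance and compatibility with geometric realization from \cite[Lemma~7.3, Theorem~7.4]{GLK} and the boundary cofiber sequence, preservation of coproducts by Nguyen's argument \cite{Ng}, and then the Bousfield--Kan formula for general homotopy colimits. Note also that in the with-boundary setting your spectrum is not literally $\mathrm{Th}(-\xi)$; the Genauer-type statement is a suspension/cofiber of Madsen--Tillmann spectra, so even at this level the description needs care.

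\textbf{The connective cover gap.} This is the more serious issue. You correctly observe that the $\Gamma$-space delooping only produces the \emph{connective cover} of the relevant spectrum. But you then conclude ``combining the two steps yields excisiveness of $\overline{\BCobSp}$,'' which is an inference that does not hold in general: the connective cover functor $(-)_{\geq 0}$ commutes with coproducts but not with arbitrary homotopy pushouts, so the connective cover of an excisive functor need not be excisive. The paper closes this gap with a crucial observation specific to the \emph{with-boundary} cobordism category: $\pi_0\BCob(\xi)=\pi_0\mathbf B(\xi)=0$ for every $\xi$, because every compact $\xi$-manifold with boundary is canonically null-bordant. Since all the spectra appearing in a homotopy pushout diagram then have vanishing $\pi_0$, the connective cover does preserve these particular pushouts, and the argument goes through. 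Without this observation your proof is incomplete, and it is precisely the reason the paper insists on working with the cobordism category \emph{with boundary}.
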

\begin{proof}
As explained in \cite[Subsection 7.4]{GLK}, the results of \cite[Section 5]{GLK} generalize to manifolds with boundaries. In particular,  \cite[Corollary 5.8]{GLK} has an analogue for manifolds with boundary,
\begin{equation}\label{eq:cob_with_boundary}
\BCob(\xi) \xrightarrow{\simeq} \Omega^{\infty} \mathbf B(\xi),
\end{equation}
where $\mathbf B(\xi)$ denotes the suspension of a spectrum $\Psi^{\mathrm{Top}, \xi}_\dell(d)$, whose $n$-th term is 
$$\Psi^{\mathrm{Top}, \xi}_\dell(d)_n = \psi^{\mathrm{Top}, \xi}_\dell(d,n+1,n);$$
here we recall that $\psi^{\mathrm{Top}, \xi}_\dell(d,n,p)$ denotes the space of $d$-dimensional $\xi$-manifolds, possibly with boundary, neatly embedded in $\RR^p \times (0,1)^{n-p-1}\times [0,1)$.

The functor $\mathbf B (-)$ is invariant under weak equivalences and commutes with geometric realizations up to weak equivalence; this is shown for the 
non--boundary version $\Psi^{\mathrm{Top}, (-)}(d)$ in \cite[Lemma 7.3 and Theorem 7.4]{GLK} and it follows for the 
boundary--version $\Psi^{\mathrm{Top}, \xi}_\dell(d)$ (and therefore also for $\mathbf{B}(-)$) from the cofiber sequence of spectra explained in \cite[p.~50]{GLK}. 

Next we argue that $\Psi^{\mathrm{Top}, \xi}_\dell(d)$ preserves coproducts in the $\xi$-variable up to weak equivalence. We can see this by observing that the spectrum $\Psi^{\mathrm{Top}, \xi}_\dell(d)$ is equivalent to the spectrum whose $n$-th term is $\psi^{\mathrm{Top}, \xi}_\dell(d,\infty,n)$ (with similarly defined structure maps)
and then using the fact that the spaces $\psi^{\mathrm{Top}, \xi}_\dell(d,\infty,n)$ preserve coproducts in the $\xi$-variable, up to weak equivalence, by the argument of \cite{Ng}.
Therefore, $\mathbf{B}(-)$ preserves coproducts up to weak equivalence. Using the Bousfield--Kan formula for general homotopy colimits, we conclude that $\mathbf{B}(-)$ preserves arbitrary homotopy colimits.

By the naturality of \eqref{eq:cob_with_boundary} in $\xi$, the equivalence \eqref{eq:cob_with_boundary} extends to an equivalence of $\Gamma$-spaces and therefore it induces an equivalence between the associated spectra. Since $\mathbf B(\xi)$ also defines a $\Gamma$-object in spectra, it follows \cite[Proposition 5.2]{Ng} that the spectrum associated with the $\Gamma$-space $\Omega^\infty (\mathbf B(\xi))$ is the connective cover $\mathbf B(\xi)_{\geq 0}$
of $\mathbf B(\xi)$. So, the equivalence \eqref{eq:cob_with_boundary} extends to an equivalence of (connective) spectra
\[\BCobSp(\xi)\to \mathbf B(\xi)_{\geq 0}.\]
Thus, in order to conclude the proof, it is enough to show that $\mathbf B(\xi)_{\geq 0}$ is again excisive in $\xi$. Clearly, it preserves small coproducts up to weak equivalence; we are left to show that it also preserves homotopy pushouts. The functor $(-)_{\geq 0}$ does not preserve general homotopy pushouts; but it does preserve those for which $\pi_0$ of each of the spectra in the homotopy pushout diagram vanishes. In our case, 
\[
\pi_0 \BCob(\xi)
\]
is given by bordism classes of $\xi$-manifolds with boundaries. Since any such $\xi$-manifold with boundary is canonically null--bordant, it follows that the classifying space is indeed connected. 
\end{proof}

\subsection{The parametrized $A$-theory characteristic} \label{def_biv_map}

There is a bivariant transformation
\begin{equation} \label{tau_spaces} 
\tau(\theta)\colon \Omega \BCob(\theta) \to \Omega^\infty \mathbf A\fibr{X}{B}, \quad \theta = (B, p\colon X \to B, \xi \colon V \to X), 
 \end{equation}
from the loop space of the bivariant theory defined by the parametrized topological cobordism category with boundary 
to bivariant $A$-theory, which is defined as in the smooth case \cite[5.1]{RS2} -- the construction does not use smoothness and therefore applies to the topological cobordism category as well. 
Roughly speaking, this transformation is given by viewing a chain of composable cobordisms as a filtration of 
their composite. Moreover, in terms of the cobordism model for $A$-theory presented in \cite[Section 4]{RS3}, it may be understood as the inclusion of the $\theta$-cobordism category into a 
``homotopy cobordism category'', which is a category of cospans of fiberwise homotopy finite spaces over $B$ with a structure map to $p$. The covariant part of this transformation was first considered by B\"okstedt--Madsen \cite{BM}.

Using the $\Gamma$-space method, the bivariant transformation \eqref{tau_spaces} may be refined to a bivariant transformation of spectrum-valued theories which we write as
\[\tau(\theta)\colon \Omega \BCobSp(\theta)\to \mathbf A\fibr X B.\]

\begin{theorem}\label{main_index_thm}
There
is a unique bivariant transformation in the homotopy category of bivariant theories with values in the category of spectra,
$$\tau^{\%} \colon \Omega \BCobSp^{\&} \longrightarrow (\overline{\mathbf A}^{\%})^{\&},$$
such that the following diagram commutes in the homotopy category of bivariant theories:
\[
\xymatrix{
 \Omega \BCobSp \ar[rr]^{\tau} \ar[d]_{\nabla} && \mathbf A \ar[d]^{\nabla_{\mathbf A}} \\
 \Omega \BCobSp^{\&} \ar[r]^{\tau^{\%}} & (\overline{\mathbf A}^{\%})^{\&} \ar[r]^{\alpha_{\overline{\mathbf A}}^{\&}} & \mathbf A^\&.
}
\]
\end{theorem}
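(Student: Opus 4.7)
The plan is to deduce this statement directly from the formal index theorem (Theorem \ref{formal_index_theorem}), applied to the bivariant transformation $\tau \colon \Omega \BCobSp \to \mathbf A$ constructed in Subsection 3.2. To do so, I need to verify the two hypotheses of Theorem \ref{formal_index_theorem}: that both bivariant theories are homotopy invariant with values in spectra, and that the covariant part of the source theory is excisive.

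Homotopy invariance of bivariant $A$-theory is recorded in Example \ref{bivariant_A_theory}. For the cobordism side, Proposition \ref{prop_homotopy} gives homotopy invariance of the space-valued theory $\BCob$; since the spectrum-valued lift $\BCobSp$ is produced by a $\Gamma$-space construction that is natural in the variable $\theta$, it inherits homotopy invariance, and the functor $\Omega$ preserves this. Excision of the covariant part $\overline{\BCobSp}$ on $\Bun$ is Proposition \ref{cob_excisive}, whose key geometric input is the Gomez-Lopez--Kupers description of the topological cobordism category. Since $\Omega \colon \Sp \to \Sp$ preserves weak equivalences and homotopy colimits, $\overline{\Omega \BCobSp}$ is excisive as well.

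With the hypotheses in place, Theorem \ref{formal_index_theorem} delivers the desired unique $\tau^{\%}$ in the homotopy category of bivariant theories, fitting into the required commutative diagram. The substantive content has already been discharged in the formal setting: one uses Proposition \ref{maps_into_excisive_theory} to push the bivariant coassembly $\tau^{\&}$ uniquely through maps out of $\C^{\&} = \Omega \BCobSp^{\&}$, and then invokes the universal property of assembly (Construction \ref{assembly}) to factor through $(\overline{\mathbf A}^{\%})^{\&}$; so specializing to the present situation requires no further argument. The main obstacle in the whole enterprise is therefore not this last step but rather the verification of excision for $\overline{\BCobSp}$, which in turn rests on the results of \cite{GLK}.
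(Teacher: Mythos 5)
Your proposal matches the paper's proof exactly: Theorem \ref{main_index_thm} is obtained as a direct application of Theorem \ref{formal_index_theorem}, with the required excision of the covariant part supplied by Proposition \ref{cob_excisive} (and homotopy invariance from Proposition \ref{prop_homotopy} and Example \ref{bivariant_A_theory}). Your elaboration of why $\Omega$ preserves the relevant properties is a reasonable unpacking of what the paper leaves implicit.
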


\begin{proof}
This is  a direct application of Theorem \ref{formal_index_theorem} using Proposition \ref{cob_excisive}. 
\end{proof}

Next we explain how Theorem \ref{main_index_thm} specializes to the Index Theorem for the $A$-theory characteristic of fiber bundles of compact topological manifolds from \cite{DWW}. 
Let $\pi \colon E \to B$ be a fiber bundle of compact topological $d$-manifolds where $B$ is a CW complex. We may choose a fiberwise embedding of $\pi$ into 
$B \times (0,1) \times \RR_+ \times \mathbb{R}^\infty$, which is cylindrical near the boundary. We denote by $\xi \colon T_{\pi} E \to E$ the vertical tangent topological
$\RR^d$-bundle (using the Kister--Mazur theorem if necessary -- see \cite[Appendix A]{GLK}). In this way, we obtain a family of $\RR^d$-bundles $\theta \defeq (B, \pi, \xi)$ and
we may regard $(E, \pi, 0<1, \id)$ as a morphism in $\calC(\theta)$ (from $\varnothing$ to $\varnothing$). This morphism defines also a class in $\pi_0 \big(\Omega \BCob(\theta) \big)$
which we will denote by $[\pi]$.

The associated class $\tau[\pi]\in \pi_0\mathbf{A}(\theta)$ is given by the retractive space $E \sqcup E$ over $E$ -- the \emph{bivariant $A$-theory characteristic} 
of $p$, see \cite[Section 4]{RS1}. The image of this element under the coassembly map $\nabla_{\mathbf{A}}$ is the \emph{parametrized $A$-theory characteristic} of $p$,
$$\chi(\pi) \colon B \to A_B(E),$$
where $A_B(E) \to B$ is the fibration that is obtained from $\pi$ by applying the (space-valued) $A$-theory functor fiberwise. On the other hand, the image of $[\pi]$ under 
the bivariant transformation $(\tau^{\%} \circ \nabla)$ yields a class $\chi^{\%}(\pi) \in \pi_0 \big((\overline{\mathbf {A}}^{\%})^{\&}(\theta)\big)$, the \emph{excisive $A$-theory characteristic} of $p$, 
$$\chi^{\%}(\pi) \colon B \to A^{\%}_B(E),$$
where $A^{\%}_B(E) \to B$ is the fibration obtained from $\pi$ by applying the functor $A^{\%}$ fiberwise. Then the commutativity of the diagram in Theorem \ref{main_index_thm} 
yields the following result due to Dwyer--Weiss--Williams \cite{DWW}. 

\begin{corollary} \label{DWW_thm}
Let $\pi \colon E \to B$ be a fiber bundle of compact topological $d$-manifolds where $B$ is a CW complex. Then the following diagram commutes up to homotopy:
$$
\xymatrix{
& A^{\%}_B(E) \ar[d]^{\alpha_{\overline{\mathbf A}, B}} \\
B \ar[r]_{\chi(\pi)} \ar[ur]^{\chi^{\%}(\pi)} & A_B(E)
}
$$
where $A^{\%}_B(E) \xrightarrow{\alpha_{\overline{\mathbf A}, B}} A_B(E)$ denotes the $A$-theory assembly map fiberwise over $B$.  
\end{corollary}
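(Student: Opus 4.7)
The plan is to apply Theorem \ref{main_index_thm} to the specific family of $\RR^d$-bundles $\theta = (B, \pi, \xi)$ and to the class $[\pi] \in \pi_0(\Omega \BCob(\theta))$ constructed in the paragraph preceding the corollary. Evaluating the homotopy-commutative square of Theorem \ref{main_index_thm} at $\theta$ and passing to $\pi_0$ yields the identity
$$\alpha_{\overline{\mathbf{A}}}^{\&}\bigl(\tau^{\%}(\nabla[\pi])\bigr) = \nabla_{\mathbf{A}}\bigl(\tau[\pi]\bigr)$$
in $\pi_0 \mathbf{A}^{\&}(\theta)$. The proof then amounts to identifying each side of this equation with the corresponding composite in the triangle of the corollary.

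First I would identify the right-hand side. By construction of $\tau$ as recalled in Subsection \ref{def_biv_map}, the class $\tau[\pi] \in \pi_0 \mathbf{A}(\theta)$ is the bivariant $A$-theory characteristic, represented by the retractive space $E\sqcup E$ over $E$. By the definition of the parametrized $A$-theory characteristic given in the paragraph above the corollary, the image of this class under $\nabla_{\mathbf{A}}$ is precisely the section $\chi(\pi) \colon B \to A_B(E)$, regarded as an element of $\pi_0 \mathbf{A}^{\&}(\theta)$.

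Next I would identify the left-hand side with the composite $\alpha_{\overline{\mathbf{A}}, B} \circ \chi^{\%}(\pi)$. The class $\tau^{\%}(\nabla[\pi])$ is by definition the excisive $A$-theory characteristic $\chi^{\%}(\pi)$, a section of $A^{\%}_B(E) \to B$. Since $\alpha_{\overline{\mathbf{A}}}^{\&}$ is obtained from the (covariant) assembly transformation $\alpha_{\overline{\mathbf{A}}}$ by the functor $F \mapsto F^{\&}$ of Construction \ref{assoc-biv-theory}, and since the latter construction realizes $F^{\&}(\theta)$ as a space of sections whose fibers are the values of $F$ on the fibers of $\pi$, the bivariant transformation $\alpha_{\overline{\mathbf{A}}}^{\&}$ acts on $\pi_0 (\mathbf{A}^{\%})^{\&}(\theta)$ by post-composing sections with the fiberwise assembly map $\alpha_{\overline{\mathbf{A}}, B} \colon A^{\%}_B(E) \to A_B(E)$. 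This gives the desired identification.

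There is no substantial obstacle beyond these identifications: the theorem to be proved is essentially a translation of the abstract statement of Theorem \ref{main_index_thm} into the concrete language of sections of fiberwise $A$-theory. The only point that merits an explicit check is that the bivariant transformation $\alpha_{\overline{\mathbf{A}}}^{\&}$ indeed implements fiberwise assembly on sections, which follows directly from the explicit description of the associated bivariant theory in Construction \ref{assoc-biv-theory}. Once these translations are in place, the commutativity of the diagram in the corollary up to homotopy is immediate from the equality of classes above.
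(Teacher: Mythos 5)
Your proposal is correct and follows essentially the same route as the paper: specialize Theorem \ref{main_index_thm} to $\theta=(B,\pi,\xi)$, evaluate the homotopy-commutative square at the class $[\pi]$, and translate the two composites into the sections $\chi(\pi)$ and $\alpha_{\overline{\mathbf A},B}\circ\chi^{\%}(\pi)$ via the definitions given just before the corollary. The one point you flag explicitly — that $\alpha_{\overline{\mathbf A}}^{\&}$ acts by post-composition with the fiberwise assembly map, by Construction \ref{assoc-biv-theory} — is the same (implicit) translation the paper relies on, so the two arguments agree.
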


We expect that the map $\chi^\%(\pi)$ agrees with the corresponding excisive $A$-theory characteristic as defined in \cite[7.11]{DWW}, which used methods 
of controlled $A$-theory in order to model the $A$-theory assembly map. We also expect that this identification can be shown by considering bivariant versions 
of the constructions in \cite[Section 7]{DWW} and appealing to the uniqueness property of $\tau^{\%}$ in Theorem \ref{main_index_thm}.

Given a family of $\RR^d$-bundles $\theta = (B, p, \xi)$, there is an associated family of $\RR^{d+1}$-bundles $\theta \oplus \epsilon \defeq (B, p, \xi \oplus \epsilon)$. There is a stabilization map
\[\calC(\theta) \to \calC(\theta\oplus \epsilon)\]
which sends an object $(E, \pi, a, l)$ to the object $(E'\defeq E\times [0,1], \pi', a, l')$, where $\pi'$ is the composite $E'\xrightarrow{\mathrm{proj}} E \xrightarrow{p} B$, and $l'$ is the composite
\[l' \colon T_{\pi'} E' \to T_{\pi} E \oplus \epsilon \xrightarrow{l\oplus\id} \xi\oplus\epsilon\]
where the first map is the canonical bundle map over the projection $E'\to E$. The neat embedding of $E'$ is the product embedding of the embedding of $E$ and a neat embedding $[0,1]\to \RR_+\times \RR$, followed by a suitable homeomorphism $\RR_+\times \RR_+\to \RR_+\times \RR$ which straightens the corner (compare \cite[Appendix A]{RS4}). This, together with a similar rule for morphisms, defines 
a bivariant transformation
\begin{equation}\label{eq:stabilization_functor}
-\times [0,1]\colon \calC(\theta) \to \calC(\theta\oplus\epsilon). 
\end{equation}
After simplicial thickening, geometric realization, and $\Gamma$-space delooping, we obtain a diagram of spectrum-valued bivariant theories
\[ \xymatrix{
 \Omega\BCobSp(\theta) \ar[rr]^{-\times [0,1]} \ar[rd]_\tau 
   && \Omega\BCobSp(\theta\oplus\epsilon) \ar[ld]^\tau\\
 & \mathbf A(p)
}\]
which commutes in the homotopy category. Hence the bivariant transformation $\tau$ for dimension $d$ factors through the one for dimension $d+1$. 

Based on an analogy with the smooth case \cite[Theorem 5.2]{RS2}, the results of \cite{GLK}, and the construction of the assembly map of $A$-theory 
in terms of higher simple homotopy theory \cite{Wa} (which is again related to stabilized manifold theory \cite{WJR}), we expect that the following
holds: 

\begin{conjecture} \label{conjecture}
The connectivity of the covariant part of $\tau^{\%}$ increases to $\infty$ as $d$ increases to $\infty$. 
\end{conjecture}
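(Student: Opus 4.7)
The plan is to reduce the conjecture to a concrete identification of the stabilized covariant part of $\tau$. Unwinding definitions, the covariant part of $\tau^{\%}$ evaluated at a topological $\RR^d$-bundle $\xi \colon V \to X$ is a map of spectra $\Omega\, \BCobSp(\xi) \to \overline{\mathbf{A}}^{\%}(\xi)$, and since $\mathbf{A}$ ignores the bundle $\xi$ (Example \ref{bivariant_A_theory}), the target is $A^{\%}(X)$ and in particular does not depend on $d$. Via the stabilization transformation \eqref{eq:stabilization_functor} one is therefore asking whether the maps $\Omega\, \BCobSp(\xi \oplus \epsilon^k) \to A^{\%}(X)$ grow in connectivity as $k \to \infty$. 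Because $\tau^{\%}$ is uniquely characterized by Theorem \ref{main_index_thm}, it suffices to produce any candidate factorization of $\tau$ through an excisive theory whose stabilized connectivity can be controlled.

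The natural candidate comes from the smooth analogue \cite[Theorem 5.2]{RS2}, where the Madsen-Tillmann identification together with Waldhausen's description of $A^{\%}$ via simple homotopy theory \cite{Wa} (and its geometric counterpart \cite{WJR}) yields an explicit model showing that, in the smooth case, the analogous covariant map becomes highly connected as $d \to \infty$. The plan is to import this picture into the topological setting: first, use Proposition \ref{cob_excisive} together with the Gomez-Lopez-Kupers description to present $\Omega\, \BCobSp(\xi)$ as the connective cover of the topological Madsen-Tillmann-type spectrum built from the spaces $\psi^{\mathrm{Top}, \xi}_{\dell}(d, \infty, n)$; then compare to the smooth model via the natural forgetful map of cobordism categories covering a smoothing of tangential data. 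Kirby-Siebenmann theory controls the difference between smooth and topological structures in a range of dimensions that improves with $d$, and this, combined with the known smooth identification and the uniqueness clause of Theorem \ref{main_index_thm}, should transfer the connectivity estimate to the topological setting.

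The main obstacle is precisely this quantitative transfer. A qualitative statement -- that the topological Madsen-Tillmann spectrum agrees with its smooth analogue after inverting enough suspensions -- is insufficient, since Conjecture \ref{conjecture} asks for the connectivity to grow with $d$. One therefore needs genuine geometric input, either in the form of explicit connectivity bounds on the smoothing comparison, or through a direct construction embedding $\calC(\theta)$ into the cobordism model for $A$-theory of \cite[Section 4]{RS3}, in the spirit of the stable parametrized $h$-cobordism theorem, with controlled connectivity. The difficulty is compounded by the relative opaqueness of $\mathrm{BTop}(d)$ compared with $\mathrm{BO}(d)$ and by the lack of direct Pontryagin-Thom techniques in the topological category, which is why the authors formulate the result as a conjecture rather than as a theorem.
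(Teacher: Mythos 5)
The statement you were asked to prove is labelled a \emph{conjecture} in the paper, and the paper offers no proof of it -- only heuristic motivation drawn from the smooth analogue \cite[Theorem 5.2]{RS2}, the Gomez-Lopez--Kupers identification \cite{GLK}, and the stable parametrized $h$-cobordism theorem \cite{Wa, WJR}. Your write-up is therefore not in competition with an existing argument, and to your credit you do not claim to close the gap: you explicitly flag the ``quantitative transfer'' as the missing ingredient. So this is a correctly-scoped discussion of a conjectural statement rather than a proof, and that is the appropriate outcome here.

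That said, one concrete step in your outline would fail as stated. You propose to compare the topological and smooth cobordism categories via a forgetful/smoothing map and then invoke Kirby--Siebenmann theory to control the difference ``in a range of dimensions that improves with $d$.'' This is not how smoothing theory behaves. The comparison between the smooth and topological Madsen--Tillmann-type spectra is governed (after delooping) by $\mathrm{Top}(d)/O(d)$, respectively by $\mathrm{Top}/O$ in the stable range, and $\mathrm{Top}/O$ is $2$-connected but not highly connected: it has nontrivial homotopy starting in degree $3$ (the Kirby--Siebenmann class) and infinitely many nonzero homotopy groups beyond that. Increasing $d$ does not improve the connectivity of this comparison; Kirby--Siebenmann theory gives a \emph{stable} statement, not one whose range grows with $d$. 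Consequently, the proposed transfer of the smooth connectivity estimate to the topological case via a smoothing map cannot, by itself, yield the conclusion that the covariant part of $\tau^{\%}$ becomes arbitrarily highly connected. If anything, the conjecture implies (rather than follows from) a strong compatibility between the stabilized smooth and topological cobordism categories, so the logical direction in your sketch is reversed. The alternative you mention in passing -- a direct comparison of $\calC(\theta)$ with the cobordism model for $A$-theory of \cite[Section 4]{RS3}, in the spirit of a topological stable parametrized $h$-cobordism theorem -- is much closer to what would actually be required, and is essentially the open geometric input that the authors are pointing to by leaving the statement as a conjecture.
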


\noindent Assuming this, a suitably stabilized version of the topological B\"okstedt--Madsen map,
\[\tau\colon \hocolim_n \Omega \BCobSp(\xi\oplus\epsilon^n) \to \mathbf A(X),\]
would yield a model for the assembly map of $A$-theory, for any numerable $\RR^d$-bundle $\xi$ over $X$.

\end{document}